\numberwithin{equation}{section}
\newtheorem{theorem}{Theorem}[section]
\newtheorem{lemma}[theorem]{Lemma}
\newtheorem{corollary}[theorem]{Corollary}
\theoremstyle{definition}
\newtheorem{definition}[theorem]{Definition}
\newtheorem{question}[theorem]{Question}
\newcommand{\N}{\mathbb{N}} 
\newcommand{\K}{\mathbb{K}} 
\newcommand{\U}{\mathcal{U}}
\newcommand{\subalign}[1]{%
  \vcenter{%
    \Let@ \restore@math@cr \default@tag
    \baselineskip\fontdimen10 \scriptfont\tw@
    \advance\baselineskip\fontdimen12 \scriptfont\tw@
    \lineskip\thr@@\fontdimen8 \scriptfont\thr@@
    \lineskiplimit\lineskip
    \ialign{\hfil$\m@th\scriptstyle##$&$\m@th\scriptstyle{}##$\hfil\crcr
      #1\crcr
    }%
  }%
}
\title[d-Frequently hypercyclic operators which fail to be d-weakly mixing]{Existence of disjoint frequently hypercyclic operators which fail to be disjoint weakly mixing}
\author{\"Ozg\"ur Martin}
\address{Department of Mathematicst\\
 Mimar Sinan Fine Arts University\\
 Silah\c{s}\"or Cad. 71, Bomonti \c{S}i\c{s}li 34380, Istanbul, Turkey}
\email{ozgur.martin@msgsu.edu.tr}
\author{Yunied Puig}
\address{Department of Mathematics\\ 
University of California at Riverside\\
900 University Ave., Riverside, CA 92521, USA.}
\email{puigdedios@gmail.com}
\subjclass{Primary 47A16} 
\keywords{Hypercyclic operator, Frequently hypercyclic operator, Disjoint hypercyclicity, Disjoint frequent hypercyclicity}
\date{August 7, 2020}
\begin{document}

\begin{abstract}
	In this short note, we answer a question of Martin and Sanders [{Integr. Equ. Oper. Theory}, {85} (2) (2016), 191-220] by showing the existence of disjoint frequently hypercyclic operators which fail to be disjoint weakly mixing and, therefore, fail to satisfy the Disjoint Hypercyclicity Criterion. We also show that given an operator $T$ such that $T \oplus T$ is  frequently hypercyclic, the set of operators $S$ such that $T, S$ are disjoint frequently hypercyclic  but fail to satisfy the Disjoint Hypercyclicity Criterion is SOT dense in the algebra of bounded linear operators. 
\end{abstract}

\maketitle

\section{Introduction}

Let $\N$ denote the set of non-negative integers, $X$ be a separable and infinite dimensional Banach space over the  real or complex scalar field $\K$, and let $\mathcal{B}(X)$ denote the algebra of bounded linear operators on $X$. An operator $T \in \mathcal{B}(X)$ is called {\it hypercyclic} if there exists $x \in X$ such that $\{T^nx:n\in \N\}$ is dense in $X$ and such a vector $x$ is said to be a hypercyclic vector for $T$. By $HC(T)$, we will denote the set of hypercyclic vectors for $T$.

Disjointness in hypercyclicity is introduced independently by Bernal \cite{Ber07} and by B\`es and Peris \cite{BePe07} in 2007. For $N \geq 2$, operators $T_1, \ldots,T_N \in \mathcal{B}(X)$ are called {\it disjoint hypercyclic} or  {\it d-hypercyclic} if the direct sum operator $T_1 \oplus \dots \oplus T_N$ has a hypercyclic vector of the form $(x, \ldots, x) \in X^N$. Such a vector $x \in X$ is called a d-hypercyclic vector for $T_1, \ldots,T_N$. By $d$-$HC(T_1, \ldots,T_N)$, we will denote the set of d-hypercyclic vectors for $T_1, \ldots,T_N$. If $d$-$HC(T_1, \ldots,T_N)$ is dense, $T_1, \ldots,T_N$ are called to be {\it densely d-hypercyclic}. 

It is well known that for $T \in \mathcal{B}(X)$, the set $HC(T)$ is either empty or dense and it is non-empty if and only if $T$ is {\it topologically transitive}, that is for any two non-empty open $U,V \subset X$, there exists a positive integer $n$ such that $U \cap T^{-n}(V) \neq \emptyset$. Similarly, B\`es and Peris \cite{BePe07} proved that operators $T_1, \ldots,T_N$ are densely d-hypercyclic if and only if they are {\it d-topologically transitive}, that is for any non-empty open $U, V_1, \ldots, V_N \subset X$, there exists a positive integer $n$ such that $U \cap T_1^{-n}(V_1) \cap \ldots \cap T_N^{-n}(V_N) \neq \emptyset$. In \cite{SaSh14}, contrary to the single operator case, Sanders and Shkarin showed the existence of d-hypercyclic operators which are not densely d-hypercyclic and, therefore, fail to be d-topologically transitive.

We next remind a necessary condition for d-hypercyclicity, a natural extension of the Hypercyclicity Criterion which has played a significant role in linear dynamics. Note that for $N = 1$, the following definition gives the  single operator version of the Hypercyclicity Criterion.

\begin{definition} \cite{BePe07}\label{def:CoHC}
{\rm Let $(n_k)$ be a strictly increasing sequence of positive
integers. We say that $T_1,\dots,T_N\in \mathcal{B}(X)$ satisfy the {\it
d-Hypercyclicity Criterion with respect to $(n_k)$} provided there
exist dense subsets $X_0, X_1,\dots, X_N$ of $X$ and mappings
$S_{m, k}:X_m\to X$ with $1\le m \le N, \ k\in \N$ satisfying
\begin{equation} \label{eq:CoHC}
\begin{aligned}
 T_m^{n_k} &\underset{k\to\infty}{\longrightarrow} 0 \ \ \ \ \ \mbox{ pointwise on $X_0$,} \\
  S_{m, k} &\underset{k\to\infty}{\longrightarrow} 0 \ \ \ \ \ \mbox{ pointwise on $X_m$, and } \\
  \left( T_m^{n_k}S_{i,k}-\delta_{i,m}\, Id_{X_m} \right) &\underset{k\to\infty}{\longrightarrow } 0
  \ \ \ \ \ \mbox{ pointwise on $X_m$ $(1\le i\le N)$.}
\end{aligned}
\end{equation}
In general, we say that $T_1,\dots,T_N$ satisfy the d-Hypercyclicity
Criterion if there exists some sequence $(n_k)$ for which
\eqref{eq:CoHC} is satisfied.}
\end{definition}

\begin{theorem} \cite[Theorem 2.7]{BePe07} \label{T:HDCoHC}
$T_1,\dots, T_N \in \mathcal{B}(X)$  satisfy the d-Hypercyclicity
Criterion if and only if for each $r\in\N$, the direct sum operators $\bigoplus_{i=1}^{r}T_1, \dots , \bigoplus_{i=1}^{r}T_N$ are d-topologically transitive on $X^r$.
\end{theorem}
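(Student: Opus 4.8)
The plan is to prove the two implications separately, writing $T_m^{(r)}:=\bigoplus_{i=1}^{r}T_m$ for the $r$-fold ampliation acting on $X^r$.

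For the forward implication, suppose $T_1,\dots,T_N$ satisfy the d-Hypercyclicity Criterion with respect to $(n_k)$, with dense sets $X_0,\dots,X_N$ and maps $S_{i,k}\colon X_i\to X$. Fix $r$ and nonempty open sets $U,V_1,\dots,V_N\subseteq X^r$. Using density of $X_0$ and of each $X_i$ (hence of $X_0^r,X_i^r$ in $X^r$), I choose $\mathbf a=(a^1,\dots,a^r)\in U$ with $a^p\in X_0$ and $\mathbf b_i=(b_i^1,\dots,b_i^r)\in V_i$ with $b_i^p\in X_i$. The natural candidate realizing d-topological transitivity is
\[
\mathbf u_k:=\mathbf a+\sum_{i=1}^N\big(S_{i,k}b_i^1,\dots,S_{i,k}b_i^r\big).
\]
The three relations of the Criterion give, coordinatewise, $\mathbf u_k\to\mathbf a\in U$ (since $S_{i,k}b_i^p\to0$) and $T_\ell^{(r)\,n_k}\mathbf u_k\to\mathbf b_\ell\in V_\ell$ for every $\ell$ (since $T_\ell^{n_k}a^p\to0$ and $T_\ell^{n_k}S_{i,k}b_i^p\to\delta_{i,\ell}b_i^p$). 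Hence for large $k$ the point $\mathbf u_k$ witnesses d-topological transitivity of $(T_1^{(r)},\dots,T_N^{(r)})$ with $n=n_k$, so this direction is routine.

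For the converse, assume the ampliations $(T_1^{(r)},\dots,T_N^{(r)})$ are d-topologically transitive for every $r$; I would reconstruct the Criterion by a diagonal construction over a fixed countable dense set $D=\{d_j\}_{j\ge1}\subseteq X$. The engine is a single application, at stage $k$, of d-topological transitivity on $X^{(N+1)k}$, whose coordinates I label by $k$ source slots and $Nk$ preimage slots $(i,j)$ with $1\le i\le N$, $1\le j\le k$. Taking the source box centred at $d_j$ in source slot $j$ and at $0$ in every preimage slot, and the target box for $T_m$ (ampliated) centred at $0$ in every source slot and at $\delta_{i,m}d_j$ in preimage slot $(i,j)$, transitivity produces an integer $n_k$ and a single point whose coordinates read off as vectors $a_j^{(k)}\approx d_j$ with $T_m^{n_k}a_j^{(k)}\approx0$, and vectors $s_{i,j}^{(k)}\approx0$ with $T_i^{n_k}s_{i,j}^{(k)}\approx d_j$ and $T_m^{n_k}s_{i,j}^{(k)}\approx0$ for $m\ne i$, all to within $1/k$; the disjoint requirement that one vector $s_{i,j}^{(k)}$ meets all $N$ image constraints simultaneously is exactly what d-topological transitivity delivers. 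Setting $X_i:=D$ and $S_{i,k}(d_j):=s_{i,j}^{(k)}$ for $j\le k$ then yields conditions two and three of the Criterion, since the maps $S_{i,k}$ are permitted to depend on $k$ and only their tails in $k$ matter.

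The main obstacle is condition one: the fixed dense set $X_0$ must satisfy $T_m^{n_k}x\to0$ for each fixed $x\in X_0$, whereas the vectors $a_j^{(k)}$ above depend on $k$, and one cannot simply replace $a_j^{(k)}$ by $d_j$, since $T_m^{n_k}$ may blow up the small norm difference $d_j-a_j^{(k)}$. I would resolve this by folding the source slots into a nested Cauchy scheme: at stage $k$ I centre source slot $j<k$ not at $d_j$ but at the previously produced $a_j^{(k-1)}$, shrinking the source radius there to some $\eta_k<2^{-k}\big/\max_{\ell<k,\,m}\norm{T_m^{n_\ell}}$. Then $\norm{a_j^{(k)}-a_j^{(k-1)}}<\eta_k$, so $a_j:=\lim_k a_j^{(k)}$ exists and lies near $d_j$, making $X_0:=\{a_j\}$ dense; and for fixed $\ell$ the telescoping estimate $\norm{T_m^{n_\ell}a_j}\le\norm{T_m^{n_\ell}a_j^{(\ell)}}+\sum_{k\ge\ell}\norm{T_m^{n_\ell}}\,\eta_{k+1}<1/\ell+2^{-\ell}$ forces $T_m^{n_\ell}a_j\to0$, because only the finitely many fixed operators $T_m^{n_1},\dots,T_m^{n_{\ell}}$ act on each tail correction. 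The sole remaining point is to arrange $n_k>n_{k-1}$ at every stage, which is available because d-topological transitivity, being equivalent to dense d-hypercyclicity as recalled in the introduction, supplies valid return times that are unbounded; picking $n_k$ beyond $n_{k-1}$ then produces a strictly increasing sequence and completes the Criterion.
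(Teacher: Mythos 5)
This theorem is imported from B\`es and Peris \cite[Theorem 2.7]{BePe07}; the present paper states it without proof, so there is no internal argument to compare against. Your proof is correct and follows essentially the route of the cited reference: the forward implication is the standard perturbation $\mathbf a+\sum_{i}(S_{i,k}b_i^1,\dots,S_{i,k}b_i^r)$, and the converse is the usual diagonal construction over a countable dense set applied to transitivity of ever-larger ampliations. The two points that genuinely require care --- obtaining a \emph{fixed} dense set $X_0$ on which $T_m^{n_k}\to 0$ even though the approximating vectors produced at each stage depend on the stage, and forcing $(n_k)$ to be strictly increasing --- are both handled correctly: the first by your nested Cauchy scheme with radii $\eta_k$ controlled by $\max_{\ell<k,m}\norm{T_m^{n_\ell}}$, and the second by the observation that return-time sets of a densely d-hypercyclic tuple on a space without isolated points are infinite.
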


In \cite{BePe99}, B\`es and Peris showed that an operator $T \in \mathcal{B}(X)$ satisfies the Hypercyclicity Criterion if and only if it is {\it weakly mixing}, that is $T \oplus T$ is topologically transitive. An older result of Furstenberg \cite{Fu67} also shows that $T$ is weakly mixing if and only if for each $r\in\N$, the direct sum operator $\bigoplus_{i=1}^{r}T$ is topologically transitive. In a landmark result, De la Rosa and Read \cite{DeRe09} constructed a Banach space that supports a hypercyclic operator which is not weakly mixing, and thus fails to satisfy the Hypercyclicity Criterion. 

In the disjointness case, the picture is again different. We say $T_1,\dots, T_N \in \mathcal{B}(X)$ are {\it d-weakly mixing} if $T_1 \oplus T_1, \ldots, T_N \oplus T_N$ are d-topologically transitive on $X^2$. In \cite{SaSh14}, Sanders and Shkarin also showed that every Banach space supports d-weakly mixing operators which fail to satisfy the d-Hypercyclicity Criterion. Then combining the above-mentioned results, we have the following implications:\\

\begin{center}
{\it d-Hypercyclicity Criteron $\Rightarrow$ d-weakly mixing $\Rightarrow$ d-topologically transitive $\Rightarrow$ d-hypercyclic}
\end{center}

\vspace{2mm}

\noindent and none of the reverse implications hold.

In this short note, we are interested in the disjoint version of a strong recurrence property of hypercyclic operators, called frequent hypercyclicity, which is introduced by Bayart and Grivaux \cite{BaGr06}. An operator $T \in \mathcal{B}(X)$ is called {\it frequently hypercyclic} if there exists some $x \in X$ such that for every non-empty open set $U \subset X$ the set  $\{n : T^nx \in U\} \subset \N$ has positive lower density. Such a vector $x$ is called a {\it frequently hypercyclic vector} for the operator $T$. Recall that the lower density of a set $A \subset \N$ is defined by
\begin{equation*}
	\mbox{\underline{dens}} A:=\liminf_{N\to \infty}\frac{card\{n \leq N: n\in A\}}{N}.
\end{equation*}
Frequent hypercyclicity cleary implies hypercyclicity. In fact, Grosse-Erdmann and Peris \cite{GrPe05} showed that frequently hypercyclic operators are weakly mixing and, therefore, they satisfy the Hypercyclicity Criterion. 

We say $T_1, \ldots , T_N \in \mathcal{B}(X)$ with $N \geq 2$ are {\it d-frequently hypercyclic} if there exists a vector $x$ in $X$ such that the vector $(x, \dots, x)$ is a frequently hypercyclic vector for the direct sum operator $T_1 \oplus  \cdots \oplus T_N$ on $X^N$. Clearly, d-frequent hypercyclicity  implies d-hypercyclicity, however it is not clear whether d-frequent hypercyclicity implies  d-weakly mixing or even d-topological transitivity (dense d-hypercyclicity). In the next section, we will answer in the negative the following questions posed in \cite{MaSa16}:\\

\begin{question}
  If $T_1,T_2 \in \mathcal{B}(X)$ are d-frequently hypercyclic, must they be d-weakly mixing? Must they satisfy the d-Hypercyclicity Criterion?\\
\end{question}

\section{d-Frequently hypercyclic operators which fail to be d-weakly mixing}

For any $T, T_1, \ldots, T_N \in \mathcal{B}(X)$, let $FHC(T)$ and $d$-$FHC(T_1, \ldots, T_N)$ denote the sets of frequently hypercyclic vectors of $T$ and d-frequently hypercyclic vectors of $T_1, \ldots, T_N$, respectively. Note that, if $(f_1, \ldots, f_N) \in HC(T \oplus \ldots \oplus T)$ then the vectors $f_1, \ldots, f_N$ must be linearly independent. By modifying the results of Sanders and Shkarin in \cite{SaSh14}, we first show the existence of d-frequently hypercyclic operators which fail to be d-weakly mixing.

\begin{lemma}\label{lemma_invertible}
	Let $T,L$ be in $\mathcal{B}(X)$ and $L$ be invertible. If $S := L^{-1}TL$, then $f \in d$-$FHC(T, S)$ if and only if $(f, Lf) \in FHC(T \oplus T)$.
\end{lemma}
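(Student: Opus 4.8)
The plan is to recognize the claimed equivalence as a statement about topological (in fact linear) conjugacy. I would build an invertible bounded linear map on $X^2$ that conjugates $T \oplus T$ to $T \oplus S$ and sends $(f, Lf)$ to $(f,f)$, and then invoke the fact that frequent hypercyclicity is preserved under such a conjugacy.

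First, since $S = L^{-1}TL$, a straightforward induction gives $S^n = L^{-1}T^nL$ for every $n \in \N$. I would then introduce the map $\Psi \colon X^2 \to X^2$ defined by $\Psi(x,y) = (x, L^{-1}y)$. As $L$ is invertible, $\Psi$ is a bounded linear bijection with bounded inverse $\Psi^{-1}(x,y) = (x, Ly)$, hence a homeomorphism of $X^2$. A direct computation establishes the intertwining relation $\Psi \circ (T \oplus T) = (T \oplus S) \circ \Psi$: for $(x,y) \in X^2$,
$$(T \oplus S)\bigl(\Psi(x,y)\bigr) = \bigl(Tx,\, S L^{-1}y\bigr) = \bigl(Tx,\, L^{-1}Ty\bigr) = \Psi\bigl(Tx, Ty\bigr) = \Psi\bigl((T \oplus T)(x,y)\bigr).$$
Iterating yields $\Psi \circ (T \oplus T)^n = (T \oplus S)^n \circ \Psi$ for all $n$, and in particular $(T \oplus S)^n(f,f) = \Psi\bigl((T\oplus T)^n(f, Lf)\bigr)$, using that $\Psi(f, Lf) = (f, L^{-1}Lf) = (f,f)$.

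It then remains to transport frequently hypercyclic vectors across $\Psi$. Let $W$ be an arbitrary nonempty open subset of $X^2$; since $\Psi$ is a homeomorphism, $\Psi^{-1}(W)$ is nonempty and open, and the intertwining relation gives, for every $n \in \N$,
$$(T \oplus S)^n(f,f) \in W \iff (T \oplus T)^n(f, Lf) \in \Psi^{-1}(W).$$
Thus the visiting-time sets $\{n : (T \oplus S)^n(f,f) \in W\}$ and $\{n : (T \oplus T)^n(f, Lf) \in \Psi^{-1}(W)\}$ coincide, so one has positive lower density exactly when the other does. As $W$ ranges over all nonempty open subsets of $X^2$, the set $\Psi^{-1}(W)$ ranges over exactly the same family; hence $(f,f)$ is a frequently hypercyclic vector for $T \oplus S$ if and only if $(f, Lf)$ is one for $T \oplus T$. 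By definition this is precisely $f \in d\text{-}FHC(T,S) \iff (f, Lf) \in FHC(T \oplus T)$.

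The argument is self-contained and I do not expect a genuine obstacle. The only points demanding care are orienting the conjugacy correctly—placing $L^{-1}$ in the definition of $\Psi$ so that it matches the identity $S^n = L^{-1}T^nL$—and confirming that both $\Psi$ and $\Psi^{-1}$ are bounded, which is what guarantees that nonempty open sets correspond bijectively to nonempty open sets and thereby makes the visiting-time sets agree in both directions.
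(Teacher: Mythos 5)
Your proof is correct and is essentially the paper's argument in conjugacy clothing: the intertwining map $\Psi=\mathrm{Id}\oplus L^{-1}$ encodes exactly the paper's observation that $S^nf\in L^{-1}(V)$ iff $T^nLf\in V$ (and its converse with $L(V)$), so the visiting-time sets coincide. A minor bonus of your formulation is that it handles arbitrary non-empty open subsets of $X^2$ directly, whereas the paper works with product sets $U\times V$ and implicitly uses that these form a base.
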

\begin{proof}
If $f \in d$-$FHC(T, S)$,  then for any two non-empty, open sets $U,V \subset X$  we have
\[
	\mbox{\underline{dens}}\{n \in \N: T^n f \in U, S^n f \in L^{-1}(V) \}  > 0.
\]
This gives that $\mbox{\underline{dens}}\{n \in \N: T^n f \in U, T^n L f \in V \} > 0$ and, therefore, $(f, Lf) \in FHC(T \oplus T)$.

Now if we assume $(f, Lf) \in FHC(T \oplus T)$ and  $U,V \subset X$ are non-empty and open, then by invertibility of $L$, we can say that 
\[
	\mbox{\underline{dens}}\{n \in \N: T^n f \in U, T^n Lf \in L(V) \}  > 0.
\]
But, this implies $\mbox{\underline{dens}}\{n \in \N: T^n f \in U, L^{-1}T^n L f \in V \}  > 0$ and, thus, $f \in d$-$FHC(T, S)$.
\end{proof}
\begin{lemma}\label{lemma_iterations}
	Let $T$ be in $\mathcal{B}(X)$ and $(f,g) \in FHC(T \oplus T)$. 
\begin{enumerate}
	\item For any $r \in \N$, $(T^r f, g) \in FHC(T \oplus T)$.
	\item For any non-zero $c \in \K$,  $(f, f + cg) \in FHC(T \oplus T)$.
\end{enumerate}
\end{lemma}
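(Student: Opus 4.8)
The plan is to reduce both statements to the defining property of $FHC(T\oplus T)$: a pair $(f,g)$ lies in $FHC(T\oplus T)$ precisely when, for every choice of non-empty open sets $U,V\subseteq X$, the set
\[
\{n\in\N : T^n f\in U,\ T^n g\in V\}
\]
has positive lower density, since the products $U\times V$ form a basis for the topology of $X^2$ and $(T\oplus T)^n(f,g)=(T^n f,T^n g)$. In particular $f\in FHC(T)$, so $T$ is hypercyclic.

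For part (1), I would argue by a pullback together with monotonicity of lower density. Fix non-empty open $U,V\subseteq X$ and set $U_0:=(T^r)^{-1}(U)$, which is open by continuity of $T^r$. The essential point is that $U_0$ is non-empty: since $f$ is a hypercyclic vector for $T$, the tail $\{T^n f:n\ge r\}$ of its orbit is still dense (an infinite dimensional Banach space has no isolated points, so deleting finitely many orbit points preserves density), and this tail is contained in the range of $T^r$; hence $T^r$ has dense range and meets $U$. For $n$ with $T^n f\in U_0$ and $T^n g\in V$ we have $T^{n+r}f=T^r(T^n f)\in U$, so that
\[
\{n\in\N : T^n f\in U_0,\ T^n g\in V\}\subseteq\{n\in\N : T^{n+r}f\in U,\ T^n g\in V\}.
\]
The left-hand set has positive lower density because $(f,g)\in FHC(T\oplus T)$, and since $(T\oplus T)^n(T^r f,g)=(T^{n+r}f,T^n g)$, monotonicity of lower density gives $(T^r f,g)\in FHC(T\oplus T)$.

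For part (2), I would use a commuting invertible change of variables on $X^2$. Define $\Phi\in\mathcal{B}(X^2)$ by $\Phi(x,y)=(x,x+cy)$; since $c\ne 0$, the map $\Phi$ is invertible with $\Phi^{-1}(x,y)=(x,c^{-1}(y-x))$, and a direct computation shows $\Phi(T\oplus T)=(T\oplus T)\Phi$. The general principle I would invoke is that any invertible operator commuting with $B:=T\oplus T$ carries $FHC(B)$ into itself: for $v\in FHC(B)$ and non-empty open $W\subseteq X^2$,
\[
\{n\in\N : B^n(\Phi v)\in W\}=\{n\in\N : B^n v\in\Phi^{-1}(W)\},
\]
and $\Phi^{-1}(W)$ is again non-empty and open, so positive lower density transfers. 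Applying this to $v=(f,g)$ yields $(f,f+cg)=\Phi(f,g)\in FHC(T\oplus T)$.

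The main obstacle is confined to part (1): because $T^r$ need not be invertible, the clean commuting-operator argument of part (2) does not apply, and the only delicate point is verifying that the pulled-back set $U_0$ is non-empty, which is exactly where hypercyclicity of $T$, via density of the range of $T^r$, enters. Part (2) is essentially immediate once the commuting invertible map $\Phi$ is identified.
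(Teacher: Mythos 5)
Your proof is correct. For part (1) you follow essentially the same route as the paper: pull $U$ back to $T^{-r}(U)$ and observe that $T^n f\in T^{-r}(U)$ means $T^n(T^rf)\in U$. In fact you are more careful than the paper, which applies the frequent hypercyclicity of $(f,g)$ to the pair $\bigl(T^{-r}(U),V\bigr)$ without remarking that $T^{-r}(U)$ must be non-empty for this to make sense; your observation that $f\in FHC(T)\subseteq HC(T)$ forces $T^r$ to have dense range supplies exactly the missing justification. For part (2) you take a genuinely different, more conceptual route: the paper fixes $u_0\in U$, $v_0\in V$, sets $x_0=v_0-u_0$, and runs an explicit $\varepsilon/2$-ball computation, applying the definition to the open sets $B(u_0,\varepsilon/2)$ and $c^{-1}B(x_0,\varepsilon/2)$ to land $T^nf+cT^ng$ in $B(v_0,\varepsilon)\subseteq V$; you instead introduce the invertible operator $\Phi(x,y)=(x,x+cy)$ on $X^2$, check that it commutes with $T\oplus T$, and invoke the general principle that an invertible operator commuting with $B$ maps $FHC(B)$ into itself, since $\{n:B^n(\Phi v)\in W\}=\{n:B^nv\in\Phi^{-1}(W)\}$ with $\Phi^{-1}(W)$ non-empty and open. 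Your version is shorter, avoids the ball bookkeeping, and isolates a reusable lemma (quasi-conjugacy preserves frequent hypercyclicity) that is very much in the spirit of the paper's Lemma \ref{lemma_invertible}; the paper's version is more elementary and self-contained. Both are valid, so there is nothing to fix.
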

\begin{proof}
To prove the first statement, let $U,V \subset X$ be any two non-empty open sets and $r \in \N$. We have that
\[
	\mbox{\underline{dens}}\{n \in \N: T^n f \in T^{-r}(U), T^n g \in V \}  > 0,
\]
which implies $\mbox{\underline{dens}}\{n \in \N: T^n(T^rf) \in U, T^n g \in V \}  > 0$, and $(T^r f, g) \in FHC(T \oplus T)$.

For the second statement, let $c \in \K$ with $c \neq 0$ and  $U,V \subset X$  be non-empty and open sets. Choose $u_0 \in U$ and $v_0 \in V$ and define $x_0 := v_0 - u_0$. Choose $\varepsilon > 0$ such that $B(u_0, \varepsilon) \subset U$ and $B(v_0, \varepsilon) \subset V$ where $B(x, r)$ denotes the ball centered at $x \in X$ with radius $r > 0$. 

By our assumption, we have that 
\[
	\mbox{\underline{dens}}\{n \in \N: T^n f \in B(u_0, \varepsilon / 2), T^n g \in c^{-1} B(x_0, \varepsilon / 2) \}  > 0.
\]
This implies $\mbox{\underline{dens}}\{n \in \N: T^n f \in B(u_0, \varepsilon / 2), T^n f + cT^n g \in B(v_0, \varepsilon) \}  > 0$, and therefore,
\[
	\mbox{\underline{dens}}\{n \in \N: T^n f \in U, T^n (f + cg) \in V \}  > 0.
\]
Thus, $(f, f + cg) \in FHC(T \oplus T)$.
\end{proof}

\begin{theorem}\label{theorem_weakly_mixing}
\label{theorem_izmir}
Let $X$ be a separable  Banach space and $T \in \mathcal{B}(X)$ such that $T \oplus T$ is frequently hypercyclic on $X \times X$. Then, there exists  $S \in \mathcal{B}(X)$ such that $T, S$ are densely d-frequently hypercyclic  but they fail to be d-weakly mixing.
\end{theorem}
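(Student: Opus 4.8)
The plan is to realize $S$ as a conjugate $S=L^{-1}TL$ of $T$, so that Lemma~\ref{lemma_invertible} reduces everything to statements about $FHC(T\oplus T)$. Fix a base vector $(f_0,g_0)\in FHC(T\oplus T)$; being hypercyclic for $T\oplus T$, the vectors $f_0,g_0$ are nonzero and linearly independent. The decisive choice is to take $L$ to be a \emph{rank-one perturbation of the identity}, $L=I+g_0\otimes\varphi$, where $g_0\otimes\varphi$ denotes the map $x\mapsto\varphi(x)\,g_0$ and $\varphi\in X^*$ is chosen (using the independence of $f_0,g_0$) with $\varphi(f_0)=1$ and $\varphi(g_0)=0$. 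The second condition makes $L$ unipotent, hence invertible with $L^{-1}=I-g_0\otimes\varphi$, so that $S=L^{-1}TL$ is well defined and Lemma~\ref{lemma_invertible} is available.

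For the failure of d-weak mixing I would argue by contradiction. If $T,S$ were d-weakly mixing, then $T\oplus T$ and $S\oplus S$ would be d-topologically transitive on $X^2$, hence densely d-hypercyclic, so there would be a d-hypercyclic vector $w=(w_1,w_2)\in X^2$, i.e. $(w_1,w_2,w_1,w_2)$ hypercyclic for $(T\oplus T)\oplus(S\oplus S)$. Conjugating by the homeomorphism $I\oplus I\oplus L\oplus L$, which intertwines $(T\oplus T)\oplus(S\oplus S)$ with $T\oplus T\oplus T\oplus T$ because $LSL^{-1}=T$, would force $(w_1,w_2,Lw_1,Lw_2)$ to be hypercyclic for $T\oplus T\oplus T\oplus T$. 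But $Lw_i=w_i+\varphi(w_i)\,g_0\in\spa\{w_i,g_0\}$, so all four coordinates lie in $\spa\{w_1,w_2,g_0\}$, a space of dimension at most $3$; the quadruple is therefore linearly dependent and its common orbit is trapped in a proper closed hyperplane, contradicting hypercyclicity. This dimension count is the structural point of the example: a rank-one $L$ keeps the pair $(x,Lx)$ two-dimensional (leaving plain d-transitivity unobstructed) while confining $(w_1,w_2,Lw_1,Lw_2)$ to three dimensions, which is exactly what kills the doubled system.

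The remaining, and I expect hardest, task is the density of $d$-$FHC(T,S)=\{f:(f,Lf)\in FHC(T\oplus T)\}$; the obstacle is that $L$ does not commute with $T$, so one cannot simply transport a single good vector around by the orbit of $T$. I would resolve this in two stages with Lemma~\ref{lemma_iterations}, keeping the second coordinate frozen at $g_0$. First, Lemma~\ref{lemma_iterations}(1) gives $(T^r f_0,g_0)\in FHC(T\oplus T)$ for every $r\in\N$. Then, for each $r$ with $\varphi(T^r f_0)\neq0$, Lemma~\ref{lemma_iterations}(2) applied to $(T^r f_0,g_0)$ with $c=\varphi(T^r f_0)$ yields
\[
\bigl(T^r f_0,\; T^r f_0+\varphi(T^r f_0)\,g_0\bigr)=\bigl(T^r f_0,\; L\,T^r f_0\bigr)\in FHC(T\oplus T),
\]
so $T^r f_0\in d$-$FHC(T,S)$ by Lemma~\ref{lemma_invertible}. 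Finally, since $f_0\in HC(T)$ its orbit $\{T^r f_0\}$ is dense, and deleting the points lying in the closed nowhere-dense hyperplane $\ker\varphi$ leaves a still-dense set, giving density of $d$-$FHC(T,S)$; the case $r=0$ already shows $(f_0,f_0+g_0)=(f_0,Lf_0)\in FHC(T\oplus T)$, so $T,S$ are genuinely d-frequently hypercyclic. The crucial trick making the density step work is to move only the first coordinate along the orbit while pinning the second to $g_0$, so that Lemma~\ref{lemma_iterations} converts the pinned pair $(T^r f_0,g_0)$ precisely into the graph form $(T^r f_0,\,L\,T^r f_0)$.
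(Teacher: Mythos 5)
Your proposal is correct and follows essentially the same route as the paper: the same rank-one unipotent conjugator $L=I+g_0\otimes\varphi$ with $\varphi(f_0)=1$, $\varphi(g_0)=0$, the same use of Lemmas~\ref{lemma_invertible} and~\ref{lemma_iterations} to show that $\{T^r f_0:\varphi(T^r f_0)\neq 0\}$ is a dense set of d-frequently hypercyclic vectors, and the same contradiction from $(w_1,w_2,Lw_1,Lw_2)\in HC(T\oplus T\oplus T\oplus T)$. The only cosmetic difference is at the very end, where you obtain the linear dependence by counting dimensions of $\spa\{w_1,w_2,g_0\}$ instead of first passing to $(Lw_1-w_1,Lw_2-w_2)=(\varphi(w_1)g_0,\varphi(w_2)g_0)$ as the paper does; both rest on the fact that the coordinates of a hypercyclic vector for a direct sum of copies of $T$ must be linearly independent.
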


\begin{proof}
Let $(f,g) \in FHC(T \oplus T)$ where $f$ and $g$ are linearly independent. By the Hahn-Banach Theorem, there exists a linear functional $\lambda$ in the dual space $X^*$ such that $\lambda(f) = 1$ and $\lambda(g) = 0$. Define $L \in \mathcal{B}(X)$ by $Lx = x +\lambda(x)g$ for $x \in X$. Then $L$ is an invertible operator with the inverse $L^{-1}x = x - \lambda(x)g$. Now, define $S := L^{-1}TL$. 

First, we show that $T, S$ are densely d-frequently hypercyclic. To this end, let $U \subset X$ be non-empty and open and choose $r \in \N$ such that $T^r f \in U$ and $\lambda(T^r f) \neq 0$.

Since $(f,g) \in FHC(T \oplus T)$, we have that $(T^r f,g) \in FHC(T \oplus T)$ by Lemma \ref{lemma_iterations}(1) and  $(T^r f, T^r f + \lambda(T^r f )g) \in FHC(T \oplus T)$ by Lemma \ref{lemma_iterations}(2). The last expression means $(T^r f, LT^r f)  \in FHC(T \oplus T)$ which in turn gives that $T^r f \in d$-$FHC(T, S)$ by Lemma \ref{lemma_invertible}.

Now, in order to reach a contradiction, assume that $T \oplus T$, $S \oplus S$ are d-hypercyclic. Then there exists a $(x,y) \in X \times X$ such that the set 
$$
	\{(T^n x, T^n y, S^n x, S^n y): n \geq 0\} = \{(T^n x, T^n y, L^{-1}T^n Lx,  L^{-1}T^n Ly): n \geq 0\}
$$ 
is dense in $X^4$.  Therefore,  $(x, y, Lx, Ly) \in HC(T \oplus T \oplus T \oplus T)$.

As in  Lemma \ref{lemma_iterations}, from the last statement we can derive 
\[
	(x, y, Lx - x, Ly - y) \in HC(T \oplus T \oplus T \oplus T).
\]
This implies $(x, y, \lambda(x)g, \lambda(y)g) \in HC(T \oplus T \oplus T \oplus T)$, or in particular, $(\lambda(x)g, \lambda(y)g) \in HC(T \oplus T)$. But, this is a contradiction since the vectors $\lambda(x)g$ and $\lambda(y)g$ are linearly dependent.
\end{proof}

In \cite{DeFrGrPe12}, De la Rosa et al. showed that any complex separable infinite-dimensional Banach space with an unconditional Schauder decomposition supports an operator $T$ such that $T \oplus T$ is frequently hypercyclic. Indeed, $T$ has a perfectly spanning set of eigenvectors associated to unimodular eigenvalues, so does $T\oplus T$. Hence, we have the following.

\begin{corollary} \label{corollary_existence}
	Every complex separable infinite-dimensional Banach space with an unconditional Schauder decomposition supports a densely d-frequently hypercyclic tuple of operators which fail to be d-weakly mixing and, therefore, fail to satisfy the d-Hypercyclicity Criterion.
\end{corollary}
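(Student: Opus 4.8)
The plan is to read off the corollary directly from Theorem~\ref{theorem_weakly_mixing} once its standing hypothesis has been secured, so the only real work is to produce, on every space of the stated type, an operator $T$ for which $T\oplus T$ is frequently hypercyclic.

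First I would fix a complex separable infinite-dimensional Banach space $X$ equipped with an unconditional Schauder decomposition and invoke the construction of De la Rosa et al. \cite{DeFrGrPe12} to obtain $T\in\mathcal{B}(X)$ possessing a perfectly spanning set of eigenvectors associated to unimodular eigenvalues. The key observation, already indicated in the paragraph preceding the statement, is that this eigenvector structure is inherited by the direct sum: whenever $Tu=\lambda u$ and $Tv=\lambda v$ with $\abs{\lambda}=1$, the pair $(u,v)$ is a unimodular eigenvector of $T\oplus T$, and one checks that the resulting family is again perfectly spanning in $X\times X$. By the eigenvalue criterion for frequent hypercyclicity, this guarantees that $T\oplus T$ is frequently hypercyclic on $X\times X$, which is precisely the hypothesis required in Theorem~\ref{theorem_weakly_mixing}.

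With the hypothesis in hand, I would apply Theorem~\ref{theorem_weakly_mixing} to obtain $S\in\mathcal{B}(X)$ such that $T,S$ are densely d-frequently hypercyclic yet fail to be d-weakly mixing. To finish, I would invoke the chain of implications recorded in the introduction: since the d-Hypercyclicity Criterion implies d-weak mixing (indeed, by Theorem~\ref{T:HDCoHC} it forces d-topological transitivity of every finite direct sum, in particular the $r=2$ instance defining d-weak mixing), the contrapositive shows that failure of d-weak mixing entails failure of the d-Hypercyclicity Criterion. I do not expect a genuine obstacle here: essentially all the content has been absorbed into Theorem~\ref{theorem_weakly_mixing}, and the single point deserving a line of care is the inheritance of the perfectly spanning property under the passage from $T$ to $T\oplus T$, which is routine.
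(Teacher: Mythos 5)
Your proposal is correct and follows exactly the route the paper takes: it invokes De la Rosa et al.\ \cite{DeFrGrPe12} to get $T$ with a perfectly spanning set of unimodular eigenvectors, notes that $T\oplus T$ inherits this property and is therefore frequently hypercyclic, and then applies Theorem~\ref{theorem_weakly_mixing} together with the implication chain from the introduction. No gaps; the inheritance observation you flag as the one point needing care is precisely the one-line justification the paper itself gives.
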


Chan \cite{Ch01} showed that the hypercyclic operators on a separable infinite dimensional Hilbert space form a dense subset of the algebra of continuous linear operators in the strong operator topology and, later, this result is extended to Frechet spaces by B\`es and Chan  \cite{BeCh03}. In \cite{MaSa20}, the first author and Sanders showed that for any d-hypercyclic $T_1, \ldots, T_N \in \mathcal{B}(X)$ the set of operators $S \in \mathcal{B}(X)$ such that $T_1, \ldots, T_N, S$ remain to be d-hypercyclic is SOT-dense in $X$. Motivated by these results, we give the next theorem.

\begin{theorem}
\label{theorem_lima}
	Let $X$ be a separable  Banach space and $T \in \mathcal{B}(X)$ such that $T \oplus T$ is frequently hypercyclic 
	on $X \times X$. Then the set 
	$$
		\{S \in \mathcal{B}(X): T, S \text{ are d-frequently hypercyclic but do not satisfy the d-Hypercyclicity Criterion}\}
	$$
	is SOT-dense in $\mathcal{B}(X)$.
\end{theorem}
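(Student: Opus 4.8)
The plan is to prove SOT-density through the standard reduction: it suffices to show that for an arbitrary $R \in \mathcal{B}(X)$, any finite family $x_1, \dots, x_m \in X$, and any $\eps > 0$, there is an operator $S$ in the indicated set with $\|S x_i - R x_i\| < \eps$ for all $i$. Imitating Theorem \ref{theorem_weakly_mixing}, I would seek $S$ of the form $S = L^{-1} T L$ with $L$ an invertible \emph{finite-rank} perturbation of the identity. The conjugate form makes both dynamical requirements tractable: by Lemma \ref{lemma_invertible}, $f \in d\text{-}FHC(T, S)$ precisely when $(f, Lf) \in FHC(T \oplus T)$, while the failure of the d-Hypercyclicity Criterion will come from a rank count, described next.

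The key point is that the argument closing Theorem \ref{theorem_weakly_mixing} generalizes from rank one to arbitrary finite rank. Suppose $\rank(L - I) = k < \infty$ and set $r = k+1$. If $\bigoplus^{r} T$ and $\bigoplus^{r} S$ were d-topologically transitive on $X^r$ they would in particular be d-hypercyclic, giving some $(x_1, \dots, x_r)$ whose orbit under the combined system is dense in $X^{2r}$. Writing $S^n = L^{-1} T^n L$ and applying the homeomorphism $\mathrm{Id}^{\oplus r} \oplus L^{\oplus r}$, this yields $(x_1, \dots, x_r, Lx_1, \dots, Lx_r) \in HC(\bigoplus^{2r} T)$; subtracting $x_i$ from $Lx_i$ coordinatewise (a homeomorphism commuting with $\bigoplus^{2r} T$, exactly as in Theorem \ref{theorem_weakly_mixing}) gives $(x_1, \dots, x_r, w_1, \dots, w_r) \in HC(\bigoplus^{2r} T)$ with $w_i = (L - I)x_i$, hence $(w_1, \dots, w_r) \in HC(\bigoplus^{r} T)$ after projecting onto the last $r$ coordinates. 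But all $w_i$ lie in the $k$-dimensional space $\operatorname{ran}(L - I)$ and $r = k+1 > k$, so the $w_i$ are linearly dependent, say $\sum_i c_i w_i = 0$ with $(c_i) \neq 0$; then the orbit $(T^n w_1, \dots, T^n w_r)$ stays in the proper closed subspace $\{(u_i) : \sum_i c_i u_i = 0\}$ and cannot be dense, a contradiction. By Theorem \ref{T:HDCoHC}, $T$ and $S$ therefore fail the d-Hypercyclicity Criterion. This also explains why the statement claims only failure of the Criterion and not of d-weak mixing: a higher-rank $L$ may well be d-weakly mixing, and higher rank is exactly what is needed to approximate an arbitrary $R$.

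It remains to choose $L$ so that $S = L^{-1} T L$ is simultaneously d-frequently hypercyclic with $T$ and close to $R$ on $x_1, \dots, x_m$. Since $T \oplus T$ is frequently hypercyclic, its set of frequently hypercyclic vectors is dense in $X \times X$ (if $x \in FHC(A)$ then every $A^k x \in FHC(A)$, and these are dense). I would fix such a pair $(f', g') \in FHC(T \oplus T)$, with $f'$ in general position relative to the finite set below, and build the single linear constraint $L f' = g'$ into $L$; then $(f', Lf') \in FHC(T \oplus T)$, so $f' \in d\text{-}FHC(T, S)$ by Lemma \ref{lemma_invertible}. For the approximation, matching $S$ to $R$ on $x_i$ is, after clearing $L^{-1}$, the condition $L(R x_i) = T(L x_i)$. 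I would thus prescribe $L$ on the finite-dimensional space $\spa\{x_i, R x_i, f'\}$ by choosing targets $p_i := L x_i$ and imposing $R x_i \mapsto T p_i$ together with $f' \mapsto g'$, and extend $L$ by the identity on a closed complement, which automatically keeps $\rank(L - I)$ finite.

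The main obstacle is precisely this last construction: producing a single invertible $L = I + (\text{finite rank})$ realizing the coupled prescriptions $x_i \mapsto p_i$, $R x_i \mapsto T p_i$, $f' \mapsto g'$. The difficulty is twofold. First, the prescriptions must be consistent with any linear relations among $\{x_i, R x_i, f'\}$, which constrains the admissible $p_i$; I would meet the finitely many induced equations by solving for the $p_i$, using density of $FHC(T \oplus T)$ to place $(f', g')$ so that $f'$ adds no new relation. Second, $L$ must be invertible, which I would secure either by taking the $p_i$ close to $x_i$ (exact matching, hence well within $\eps$) or, when exactness over-determines the system, by relaxing to approximate matching, where the extra slack both removes the consistency obstruction and leaves room to perturb $L$ into invertibility without raising its rank. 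Checking that these choices can always be arranged together is the technical heart of the proof; the dynamical inputs, namely the rank lemma above and Lemma \ref{lemma_invertible}, are then comparatively automatic.
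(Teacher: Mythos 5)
Your overall strategy coincides with the paper's: conjugate $T$ by an invertible $L$ with $L-I$ of finite rank, use Lemma \ref{lemma_invertible} (together with Lemma \ref{lemma_iterations}) to produce a common d-frequently hypercyclic vector, and defeat the d-Hypercyclicity Criterion via Theorem \ref{T:HDCoHC} by noting that if the range of $L-I$ has dimension $k$, then any $k+1$ vectors of the form $Lu_i-u_i$ are linearly dependent, so their joint orbit under $\bigoplus_{i=1}^{k+1}T$ lies in a proper closed subspace and $\bigoplus_{i=1}^{k+1}T,\ \bigoplus_{i=1}^{k+1}S$ cannot be d-topologically transitive. This dynamical core is correct and is exactly what the paper does (with $k=4N+1$ and $r=4N+2$), as is your observation that FHC vectors of $T\oplus T$ are dense.

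The gap is the one you flag yourself: you never actually construct the invertible $L$. Your prescription couples $Lx_i=p_i$ with $L(Rx_i)=Tp_i$, and when the family $\{x_1,\dots,x_m,Rx_1,\dots,Rx_m\}$ carries linear relations these equations can be inconsistent for every choice of $p_i$; ``solving for the $p_i$'' or ``relaxing to approximate matching'' names the problem without solving it. The paper's resolution is to move the perturbation into the target data of the approximation rather than into $L$: since $X$ is infinite dimensional, one replaces $A(e_i)$ (your $Rx_i$) by nearby vectors $f_i$ chosen so that $e_1,\dots,e_N,f_1,\dots,f_N$ are linearly independent, and then demands the \emph{exact} identity $Se_i=f_i$. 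Concretely, one picks $x_i\in FHC(T)$ and $(f,g)\in FHC(T\oplus T)$ with all of $e_i,f_i,x_i,Tx_i,f,g$ linearly independent, uses biorthogonal functionals to build a single closed complement $Z$ with $X=E\oplus Z=F\oplus Z$ where $E$ is spanned by the $e_i,f_i$ and $F$ by the $x_i,Tx_i$, and defines $L$ as the isomorphism $e_i\mapsto x_i$, $f_i\mapsto Tx_i$ on $E$ together with $z\mapsto z+\lambda(z)g$ on $Z$. Consistency and invertibility are then automatic, $Se_i=L^{-1}Tx_i=f_i$ places $S$ in the given SOT-neighborhood, $Lf=f+\lambda(f)g$ feeds Lemma \ref{lemma_iterations}(2) and Lemma \ref{lemma_invertible}, and the range of $L-I$ sits inside the $(4N+1)$-dimensional space $E+F+\K g$, which is precisely what your rank count requires. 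Without this (or an equivalent) construction, the proof is incomplete at what you correctly identify as its technical heart.
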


\begin{proof}
	Let $T \in \mathcal{B}(X)$ so that $T \oplus T$ is frequently hypercyclic on $X \times X$. Let $A \in \mathcal{B}(X)$ be arbitrary and 
	$$\U_{e_1,\ldots,e_N, \epsilon} := \{B \in \mathcal{B}(X): \|B(e_i) - A(e_i)\| < \epsilon, 1 \leq i \leq N\}$$
be a SOT-neighborhood of $A$ where $e_1,\ldots,e_N \in X$ are linearly independent and $\epsilon >0$.

Since $X$ supports a frequently hypercyclic operator, $X$ is infinite dimensional and one can find $f_1, \ldots, f_N \in X$ such that $\|f_i - A(e_i)\| < \epsilon$ for $1 \leq i \leq N$, and $e_1,\ldots, e_N, f_1, \ldots, f_N$ are linearly independent.

Now choose $x_1, \ldots, x_N, Tx_1, \ldots, Tx_N \in FHC(T)$ and $(f,g) \in FHC(T \oplus T)$ so that  the set
$$
	\mathcal{I} := \{e_1,\ldots, e_N, f_1, \ldots, f_N, x_1, \ldots, x_N, Tx_1, \ldots, Tx_N, f, g\}
$$
is linearly independent.

For each $1 \leq i \leq N$, pick $x_i^*, y_i^* \in X^*$ such that $x_i^*(e_i) = x_i^*(x_i) = 1$ with $x_i^* \equiv 0$  on $\mathcal{I} \backslash \{e_i, x_i\}$ and $y_i^*(f_i) = y_i^*(Tx_i) = 1$ with $y_i^* \equiv 0$  on $\mathcal{I} \backslash \{f_i, Tx_i\}$. Let $E := span\{e_1,\ldots, e_N, f_1, \ldots, f_N\}$, $F := span \{x_1, \ldots, x_N, Tx_1, \ldots, Tx_N\}$, and $Z := \bigcap_{i=1}^N (\ker x_i^* \cap \ker y_i^*)$. Then, $X = E \oplus Z = F \oplus Z$.

Now, note that $(f,g) \in FHC(T \oplus T) \cap (Z \times Z)$. By the Hahn-Banach theorem, we can choose a $\lambda \in X^*$ such that $\lambda(f) = 1$ and $\lambda(g) = 0$ and define $L \in \mathcal{B}(X)$ as follows:

For any $x \in X = E \oplus Z$, there exist unique $y \in E$ and $z \in Z$ such that $x = y+ z$ with $y$ in the form 
$$
	y = \sum_{j=1}^N \alpha_j e_j + \sum_{j=1}^N \beta_j f_j. 
$$
Then define $L(x)$  as
$$
	L(x) = \sum_{j=1}^N \alpha_j x_j + \sum_{j=1}^N \beta_j Tx_j + z  + \lambda(z)g.
$$
It is easy to see that $L$ is invertible where for any $x \in X = F \oplus Z$ with $x = y + z$, $y =  \sum_{j=1}^N \alpha_j x_j + \sum_{j=1}^N \beta_j Tx_j  \in F$, and $z \in Z$, we have 
$$
	L^{-1}(x) =  \sum_{j=1}^N \alpha_j e_j + \sum_{j=1}^N \beta_j f_j + z - \lambda(z)g.
$$

Now, if we define $S := L^{-1}TL$, then $S \in \U_{e_1,\ldots,e_N, \epsilon}$ since $Se_i = f_i$ for $1 \leq i \leq N$. By Lemma \ref{lemma_iterations}, $(f,g) \in FHC(T \oplus T)$ implies that  $(f, f + \lambda(f)g) \in FHC(T \oplus T)$. Since we also have that $f \in Z$, $Lf = f + \lambda(f)g$, thus $(f, Lf) \in FHC(T \oplus T)$  and, as before, $f \in d$-$FHC(T, S)$ by Lemma \ref{lemma_invertible}.

Lastly, we need to show that $T, S$ do not satisfy the Disjoint Hypercyclicity Criterion. By Theorem \ref{T:HDCoHC}, it is enough to show that $\bigoplus_{i=1}^{4N+2}T, \bigoplus_{i=1}^{4N+2}S$ cannot be disjoint topologically transitive. By way of contradiction, assume $(u_1, \ldots, u_{4N+2})$ be a disjoint hypercyclic vector for the direct sums $\bigoplus_{i=1}^{4N+2}T, \bigoplus_{i=1}^{4N+2}S$. This means that 
$$
	\{(T^nu_1, \ldots, T^nu_{4N+2}, S^nu_1, \ldots, S^nu_{4N+2}): n \geq 0\}
$$ 
is dense in $X^{8N+4}$. This, in turn, implies that 
$$
	(u_1, \ldots, u_{4N+2}, Lu_1, \ldots, Lu_{4N+2}) \in HC\left(\bigoplus_{i=1}^{8N+4}T\right).
$$
By Lemma \ref{lemma_iterations}, we conclude that 
$$
	(u_1, \ldots, u_{4N+2}, Lu_1 - u_1, \ldots, Lu_{4N+2} - u_{4N+2}) \in HC\left(\bigoplus_{i=1}^{8N+4}T\right),
$$
or 
\begin{equation}\label{eq:Lx-x}
	(Lu_1 - u_1, \ldots, Lu_{4N+2} - u_{4N+2}) \in HC\left(\bigoplus_{i=1}^{4N+2}T\right).
\end{equation}

Now, it is enough to observe that, for $1 \leq i \leq 4N+2$, we have
$$
	Lu_i - u_i \in span\{e_1,\ldots, e_N, f_1, \ldots, f_N, x_1, \ldots, x_N, Tx_1, \ldots, Tx_N, g\},
$$
and, therefore, the set $\{Lu_i - u_i : 1 \leq i \leq 4N+2\}$ is linearly dependent, contradicting (\ref{eq:Lx-x}).
\end{proof}

We remind the reader that determining whether $T\oplus T$ is frequently hypercyclic whenever $T$ is frequently hypercyclic is still an open problem since Bayart and Grivaux \cite{BaGr06} posed it for the first time in 2006. Therefore, we cannot remove the condition of frequent hypercyclicity of $T\oplus T$ in Theorem \ref{theorem_izmir} and Theorem \ref{theorem_lima}. However, we have a different panorama concerning $\mathcal{U}$-frequent hypercyclicty and d-reiterative hypercyclicity. We get these notions when we replace the positive lower density condition in the definition of frequent hypercyclicty by positive upper density and positive upper Banach density, respectively (see \cite{BeMePePu16}). A recent result by Ernst et al. \cite{ErEsMe}, asserts that $T$ is $\mathcal{U}$-frequently hypercyclic (reiteratively hypercyclic) if and only if $T\oplus T$ is  $\mathcal{U}$-frequently hypercyclic (reiteratively hypercyclic). Now, observe that the proofs of Theorem \ref{theorem_izmir} and Theorem \ref{theorem_lima} adapt easily to the notions of $\mathcal{U}$-frequent hypercyclicity and reiterative hypercyclicity without any conditions on $T \oplus T$. To the best of our knowledge, the following question is open:
 
 \begin{question}
	Does every separable Banach space support a reiteratively hypercyclic operator?
\end{question}

In \cite{BeMaPeSh12}, B\`es et al. showed the existence of a mixing operator $T$ such that $T, T^2$ are not d-mixing and asked whether there exists a mixing operator $T$ such that $T, T^2$ are not even d-topologically transitive. Using a result in ergodic Ramsey theory, the second author \cite{Pu17} answered this question in the positive by showing that the same operator $T$ given by B\`es et al. is also chaotic, and  $T, T^2$ fail to be d-topologically transitive. So, we pose the following question:

\begin{question}  
	Is there an example of a frequently hypercyclic operator $T$ for which $T,T^2$ are not  d-topologically transitive?
\end{question}
 
We end this note by restating the following open question which was posed also in \cite{MaSa16}:

\begin{question}  
	If $T_1,T_2 \in \mathcal{B}(X)$ are d-frequently hypercyclic, must they be densely d-hypercyclic (equivalently, d-topologically transitive)?
\end{question}

\end{document}